\numberwithin{equation}{section}
\theoremstyle{plain}
\newtheorem{thm}{Theorem}[section]
\newtheorem{prop}[thm]{Proposition}
\newtheorem{lem}[thm]{Lemma}
\theoremstyle{definition}
\newtheorem{rem}[thm]{Remark}
\newcommand{\ichi}{\mathbf{1}}
\newcommand{\K}{\mathbb{K}}
\newcommand{\N}{\mathbb{N}}
\newcommand{\R}{\mathbb{R}}
\newcommand{\Z}{\mathbb{Z}}
\newcommand{\calF}{\mathcal{F}}
\newcommand{\calS}{\mathcal{S}}
\newcommand{\supp}{\mathrm{supp}\, }
\newcommand{\II}{I\hspace{-3pt}I}
\begin{document}
%%%============================
%%%============================
\title[Bilinear pseudo-differential operators of the Sj\"ostrand class]
{A remark on bilinear pseudo-differential operators 
with symbols in the Sj\"ostrand class}

\author[T. Kato]{Tomoya Kato}

\address{Division of Pure and Applied Science, 
Faculty of Science and Technology, Gunma University, 
Kiryu, Gunma 376-8515, Japan}

\email{t.katou@gunma-u.ac.jp}

\date{\today}

\keywords{Bilinear pseudo-differential operators,
Sj\"ostrand symbol classes}
\thanks{This work was supported by 
the association for the
advancement of Science and Technology, Gunma University.}

\subjclass[2010]{35S05, 42B15, 42B35}

%%%================================
\begin{abstract}
In this short note, 
we consider bilinear pseudo-differential operators
with symbols belonging to the Sj\"ostrand class.
We show that those operators are bounded
from the product of the $L^2$-based 
Sobolev spaces $H^{s_1} \times H^{s_2}$ to $L^r$
for $s_1,s_2 > 0$, $s_1+s_2=n/2$,
and $1 \leq r \leq 2$.
\end{abstract}
%%%=====================================
\maketitle

%%%======================================
%%%======================================
\section{Introduction}\label{Introduction}

For a bounded measurable 
function $\sigma = \sigma (x, \xi_1, \xi_2)$ 
on $(\R^n)^{3}$,
the bilinear pseudo-differential operator
$T_{\sigma}$ is defined 
by
\[
T_{\sigma}(f_1,f_2)(x)
=\frac{1}{(2\pi)^{2n}}
\int_{(\R^n)^{2}}e^{i x \cdot(\xi_1+\xi_2)}
\sigma (x, \xi_1, \xi_2) 
\widehat{f_1}(\xi_1)\widehat{f_2}(\xi_2)
\, d\xi_1 d\xi_2
\]
for $f_1,f_2 \in \calS(\R^n)$. 
The bilinear H\"ormander symbol class,
$BS_{\rho, \delta}^m=BS_{\rho, \delta}^m(\R^n)$,
$m \in \R$, $0 \leq \delta \leq \rho \leq 1$,
consists of all
$\sigma(x,\xi_1,\xi_2) \in C^{\infty}((\R^n)^{3})$
such that
\[
|\partial^{\alpha}_x\partial^{\beta_1}_{\xi_1}
\partial^{\beta_2}_{\xi_2}\sigma(x,\xi_1,\xi_2)|
\le C_{\alpha,\beta_1,\beta_2}
(1+|\xi_1|+|\xi_2|)^{m+\delta|\alpha|-\rho(|\beta_1|+|\beta_2|)}
\]
for all multi-indices
$\alpha,\beta_1,\beta_2 \in \N_0^n=\{0, 1, 2, \dots \}^n$. 
When we state the boundedness of the bilinear operators 
$T_{\sigma}$, 
we will use the following terminology with a slight abuse.
Let $X_1,X_2$, and $Y$ be function spaces on $\R^n$ 
equipped with quasi-norms 
$\|\cdot \|_{X_1}$, $\|\cdot \|_{X_2}$, 
and $\|\cdot \|_{Y}$, 
respectively.  
If there exist a constant $C$ such that the estimate
\begin{equation*}
\| T_\sigma (f_1,f_2) \|_{Y} 
\leq
C
\| f_1 \|_{X_1}
\| f_2 \|_{X_2}
\end{equation*}
holds for all $f_1 \in \mathcal{S} \cap X_1$ 
and $f_2 \in \mathcal{S} \cap X_2$, 
then we simply say that
the operator $T_\sigma$ 
is bounded from $X_1 \times X_2$ to $Y$.

The interest of this short note is the boundedness 
from $L^2 \times L^2$ to $L^1$
for the bilinear operator $T_{\sigma}$
with the symbol $\sigma$ belonging 
to the Sj\"ostrand class.
We shall first recall some related boundedness 
results on the linear case.
For a bounded measurable 
function $\sigma = \sigma (x, \xi)$ on $(\R^n)^{2}$,
the linear pseudo-differential operator
$\sigma(X,D)$ is defined by
\begin{equation*}
\sigma (X,D) f (x) 
=
\frac{1}{\, ( 2 \pi )^n \,} 
\int_{\mathbb{R}^n} e^{i x \cdot \xi} 
\sigma (x , \xi ) \widehat f (\xi ) d\xi.
\end{equation*}
for $f \in \calS(\R^n)$.
The (linear) H\"ormander symbol class,
$S_{\rho, \delta}^m=S_{\rho, \delta}^m(\R^n)$,
$m \in \R$, $0 \leq \delta \leq \rho \leq 1$, 
consists of all functions 
$\sigma \in C^\infty ((\R^n)^{2})$ satisfying
\begin{equation*}
| \partial_x^\alpha \partial_\xi^\beta \sigma (x,\xi) | 
\leq C_{\alpha, \beta} (1+|\xi|)^{m + \delta |\alpha| - \rho |\beta|}
\end{equation*}
for all multi-indices $\alpha, \beta \in \N_0^n$.
In \cite{CV}, Calder\'on--Vaillancourt proved that
if symbols belong to the H\"ormander class $S^0_{0,0}$,
the linear pseudo-differential operators are bounded on $L^2$.
Then, Sj\"ostrand \cite{sjostrand 1994} introduced
a new wider class generating $L^2$-bounded pseudo-differential operators 
than the class $S^0_{0,0}$.
This new symbol class is today called 
as the Sj\"ostrand symbol class,
and is also identified 
as the modulation space $M^{\infty,1} ((\R^{n})^2)$
(see also Boulkhemair
\cite{boulkhemair 1995}). 
See Section \ref{secmodsp}
for the definition of modulation spaces.
%benyi grochenig okoudjou rogers 2007, 
%cunanan 2017, 
%grochenig heil 1999, toft 2004, tomita 2011

We shall next consider the bilinear case.
Based on the linear case, 
one may expect the boundedness
for the bilinear pseudo-differential operators
of the bilinear H\"ormander class $BS^{0}_{0,0}$ and 
the Sj\"ostrand class $M^{\infty,1} ((\R^{n})^{3})$.
However, B\'enyi--Torres \cite{BT-2004}
pointed out that bilinear pseudo-differential 
operators with symbols in $BS^0_{0,0}$
are not bounded from $L^2 \times L^2 $ to $L^1$.
(See, e.g., \cite{MT-2013} for the boundedness 
for the bilinear H\"ormander class.)
Therefore, 
since $BS^0_{0,0} \hookrightarrow M^{\infty,1} ((\R^{n})^{3})$,
we never have the boundedness for the Sj\"ostrand class.
On the other hand,
B\'enyi--Gr\"ochenig--Heil--Okoudjou 
\cite{benyi grochenig heil okoudjou 2005}
proved that if $\sigma \in M^{\infty,1} ((\R^{n})^3)$,
then $T_\sigma$ is bounded from $L^2 \times L^2$ 
to the modulation space $M^{1,\infty}$, 
whose target space is wider than $L^1$.
Then, B\'enyi--Okoudjou 
\cite{benyi okoudjou 2004, benyi okoudjou 2006} 
gave that
if $\sigma$ belongs to $M^{1,1} ((\R^{n})^3)$,
embedded into $M^{\infty,1} ((\R^{n})^3)$,
then $T_\sigma$ is bounded from 
$L^{2} \times L^{2}$ to $L^{1}$.
According to these results, 
in the bilinear case, 
we are able to have the boundedness on 
$L^2 \times L^2$
for Sj\"ostrand symbol class,
paying some kind of cost.

Very recently, 
in \cite{KMT-arXiv},
it was proved that
if $\sigma \in BS_{0,0}^{0}$,
then the operator $T_\sigma$ is bounded from 
$H^{s_1} \times H^{s_2}$ to $(L^{2}, \ell^1)$
for $s_1,s_2>0$, $s_1+s_2=n/2$.
Here, $H^s$, $s\in\R$,
is the $L^2$-based Sobolev space
and $(L^{2}, \ell^1)$ 
is the $L^2$-based amalgam space
(see Section \ref{secpreli}).
The aim of this short note is
to improve this boundedness 
for the class $BS_{0,0}^{0}$
to that for the Sj\"ostrand class.
The main result is the following.

%%=======================================
\begin{thm}\label{main-thm}
Let $s_1, s_2 \in (0,\infty)$ satisfy $s_1+s_2=n/2$.
Then, if $\sigma \in M^{\infty,1} ((\R^n)^3)$, 
the bilinear pseudo-differential operator 
$T_{\sigma}$ 
is bounded from 
$H^{s_1} \times H^{s_2}$ 
to $(L^2,\ell^{1})$. 
In particular, all those 
$T_{\sigma}$ are  
bounded from 
$H^{s_1} \times H^{s_2}$ 
to $L^r $ for all $r\in (1,2]$ and 
to $h^1 $. 
\end{thm}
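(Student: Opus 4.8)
The plan is to reduce the statement to the boundedness for the bilinear H\"ormander class $BS^{0}_{0,0}$ recalled above from \cite{KMT-arXiv}. The mechanism is that a symbol in the Sj\"ostrand class splits into ``frequency-uniform'' pieces, each of which, after removing a modulation and a translation, is a $BS^{0}_{0,0}$ symbol whose seminorms are all controlled by a single constant; one then estimates each piece by the known result and sums.

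First, I would use the frequency-uniform (Wiener-amalgam type) description of $M^{\infty,1}$: fix a smooth bounded partition of unity $\{\psi_k\}_{k\in\Z^{3n}}$ on the Fourier side with $\psi_k=\psi_0(\,\cdot-k)$ and $\supp\psi_0$ in a fixed ball, set $\sigma_k=\calFi(\psi_k\,\calF\sigma)$, so that $\sigma=\sum_{k}\sigma_k$ and
\[
\|\sigma\|_{M^{\infty,1}}\approx\sum_{k\in\Z^{3n}}\|\sigma_k\|_{L^\infty}.
\]
Writing $k=(k_0,k_1,k_2)\in(\Z^{n})^{3}$ and $\sigma_k(x,\xi_1,\xi_2)=e^{i(k_0\cdot x+k_1\cdot\xi_1+k_2\cdot\xi_2)}\,\tau_k(x,\xi_1,\xi_2)$, the function $\tau_k$ has $\calF\tau_k$ supported in a fixed ball independent of $k$ and satisfies $\|\tau_k\|_{L^\infty}=\|\sigma_k\|_{L^\infty}$. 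Being band-limited and bounded, $\tau_k$ is smooth, and Bernstein's inequality bounds each of its derivatives by a dimensional constant times $\|\sigma_k\|_{L^\infty}$; hence $\tau_k\in BS^{0}_{0,0}$ with every seminorm $C_{\alpha,\beta_1,\beta_2}$ dominated by $C_{\alpha,\beta_1,\beta_2}'\|\sigma_k\|_{L^\infty}$, the $C_{\alpha,\beta_1,\beta_2}'$ not depending on $k$.

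Next, a change of variables in the integral defining $T_{\sigma_k}$ gives, for $f_1,f_2\in\calS$,
\[
T_{\sigma_k}(f_1,f_2)(x)=e^{ik_0\cdot x}\,T_{\tau_k}\bigl(f_1(\,\cdot+k_1),\,f_2(\,\cdot+k_2)\bigr)(x).
\]
Since $\|\cdot\|_{H^{s_j}}$ is translation invariant and $\|\cdot\|_{(L^2,\ell^{1})}$ is modulation invariant, and since the $BS^{0}_{0,0}$ estimate of \cite{KMT-arXiv} is quantitative---its operator bound controlled by finitely many of the constants $C_{\alpha,\beta_1,\beta_2}$, and applicable under the hypotheses $s_1,s_2>0$, $s_1+s_2=n/2$---it follows that
\[
\|T_{\sigma_k}(f_1,f_2)\|_{(L^2,\ell^{1})}\le C\,\|\sigma_k\|_{L^\infty}\,\|f_1\|_{H^{s_1}}\|f_2\|_{H^{s_2}}
\]
with $C$ independent of $k$. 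Summing over $k\in\Z^{3n}$, using $T_\sigma(f_1,f_2)=\sum_k T_{\sigma_k}(f_1,f_2)$ and the above norm equivalence, yields the boundedness from $H^{s_1}\times H^{s_2}$ to $(L^2,\ell^{1})$. Finally, the ``in particular'' clause follows from the elementary embeddings $(L^2,\ell^{1})\hookrightarrow L^{r}$ for $1<r\le2$ (H\"older on unit cubes combined with $\ell^{r}\hookrightarrow\ell^{1}$) and $(L^2,\ell^{1})\hookrightarrow h^{1}$ (writing each cube piece as an $L^2$-normalized local atom).

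I expect the main obstacle to be the bookkeeping that makes the reduction $k$-uniform: one must be sure that the quoted $BS^{0}_{0,0}$ bound depends on the symbol only through a fixed finite set of seminorms---true in the Calder\'on--Vaillancourt tradition and visible in its proof---so that passing to the single quantity $\|\sigma_k\|_{L^\infty}$ is legitimate. The remaining points---justifying $\sigma=\sum_k\sigma_k$ and the interchange $T_\sigma(f_1,f_2)=\sum_k T_{\sigma_k}(f_1,f_2)$ for Schwartz $f_1,f_2$, which is immediate from $\sum_k\|\sigma_k\|_{L^\infty}<\infty$, together with the change of variables and the invariance properties of the spaces---are routine.
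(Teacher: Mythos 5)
Your outer argument---decomposing $\sigma$ into the frequency-uniform pieces $\sigma_k$, stripping off the modulations $e^{ik_0\cdot x}e^{ik_1\cdot\xi_1}e^{ik_2\cdot\xi_2}$ and the translations by $k_1,k_2$, and summing via $\sum_k\|\sigma_k\|_{L^\infty}\approx\|\sigma\|_{M^{\infty,1}}$---is exactly the reduction carried out in Section \ref{subsection-Proof-Main-1}. Where you genuinely diverge is in how each band-limited piece is estimated. The paper does not invoke the $BS^0_{0,0}$ theorem of \cite{KMT-arXiv}; it proves from scratch the key Proposition \ref{main-prop}, namely that $\supp\calF\sigma\subset B_{R_0}\times B_{R_1}\times B_{R_2}$ implies $\|T_\sigma\|_{H^{s_1}\times H^{s_2}\to(L^2,\ell^1)}\lesssim(R_0R_1R_2)^{n/2}\|\sigma\|_{L^2_{ul}}$, by a Boulkhemair-type argument (Sugimoto's $\kappa$--$\chi$ decomposition, duality, the weak-type product lemma), and then uses $\|\sigma_k\|_{L^2_{ul}}\approx\|\sigma_k\|_{L^\infty}$ for band-limited functions. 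You instead note that each demodulated piece $\tau_k$ lies in $BS^0_{0,0}$ with every seminorm $\lesssim\|\sigma_k\|_{L^\infty}$ by Bernstein's inequality, and quote the earlier theorem as a quantitative black box. This works, and is shorter; its one load-bearing assumption is that the operator norm in \cite{KMT-arXiv} is controlled by finitely many seminorms, which you correctly flag and which is supplied either by inspecting that proof or abstractly by the closed graph theorem on the Fr\'echet space $BS^0_{0,0}$. What the paper's route buys is a self-contained and more informative intermediate statement: the explicit $(R_0R_1R_2)^{n/2}$ dependence and control by the weaker $L^2_{ul}$ norm, the natural quantity for Sj\"ostrand-type classes and the form one would need for further refinements. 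Two minor points: in your justification of $(L^2,\ell^1)\hookrightarrow L^r$ the embedding should read $\ell^1\hookrightarrow\ell^r$, not the reverse; and the embedding $(L^2,\ell^1)\hookrightarrow h^1$ is obtained in the paper by citation to \cite{KMT-arxiv-2}, consistent with your atomic-decomposition sketch.
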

%%=======================================

We end this section by explaining the organization of this note. 
In Section \ref{secpreli}, we will give the basic notations 
which will be used throughout this paper
and recall the definitions and properties of some function spaces. 
In Section \ref{sectionLemma}, we collect some lemmas 
for the proof of Theorem \ref{main-thm}.
In Section \ref{sectionMain}, we show Theorem \ref{main-thm}.

%%%==========================

\section{Preliminaries}\label{secpreli}

%%%==========================
\subsection{Basic notations}

We collect notations which will be used throughout this paper.
We denote by $\R$, $\Z$, $\N$, and $\N_0$
the sets of real numbers, integers, positive integers, 
and nonnegative integers, respectively. 
We denote by $Q$ the $n$-dimensional 
unit cube $[-1/2,1/2)^n$.
The cubes $\tau + Q$, $\tau \in \Z^n$, are mutually 
disjoint and constitute a partition of the 
Euclidean space $\R^n$.
This implies integral of a function on $\R^{n}$ 
can be written as 
\begin{equation}\label{cubicdiscretization}
\int_{\R^n} f(x)\, dx = 
\sum_{\tau \in \Z^{n}} \int_{Q} f(x+ \tau)\, dx. 
\end{equation}
We denote by $B_R$ the closed ball in $\R^n$
of radius $R>0$ centered at the origin.
We write the characteristic function 
on the set $\Omega$ as $\mathbf{1}_{\Omega}$.
For $x\in \R^d$, we write 
$\langle x \rangle = (1 + | x |^2)^{1/2}$.

For two nonnegative functions $A(x)$ and $B(x)$ defined 
on a set $X$, 
we write $A(x) \lesssim B(x)$ for $x\in X$ to mean that 
there exists a positive constant $C$ such that 
$A(x) \le CB(x)$ for all $x\in X$. 
We often omit to mention the set $X$ when it is 
obviously recognized.  
Also $A(x) \approx B(x)$ means that
$A(x) \lesssim B(x)$ and $B(x) \lesssim A(x)$.

We denote the Schwartz space of rapidly 
decreasing smooth functions
on $\R^d$ 
by $\calS (\R^d)$ 
and its dual,
the space of tempered distributions, 
by $\calS^\prime(\R^d)$. 
The Fourier transform and the inverse 
Fourier transform of $f \in \calS(\R^d)$ are given by
\begin{align*}
\mathcal{F} f  (\xi) 
&= \widehat {f} (\xi) 
= \int_{\R^d}  e^{-i \xi \cdot x } f(x) \, dx, 
\\
\mathcal{F}^{-1} f (x) 
&= \check f (x)
= \frac{1}{(2\pi)^d} \int_{\R^d}  e^{i x \cdot \xi } f( \xi ) \, d\xi,
\end{align*}
respectively. 
For $m \in \calS^\prime (\R^d)$, 
the Fourier multiplier operator is defined by
\begin{equation*}
m(D) f 
=
\mathcal{F}^{-1} \big[ m \widehat{f}\, \big].
\end{equation*}
We also use the notation $(m(D)f)(x)=m(D_x)f(x)$ 
when we indicate which variable is considered.

For a measurable subset $E \subset \R^d$, 
the Lebesgue space $L^p (E)$, $1 \le p\le \infty$, 
is the set of all those 
measurable functions $f$ on $E$ such that 
%\begin{equation*}
$\| f \|_{L^p(E)} = 
\left( \int_{E} \big| f(x) \big|^p \, dx \right)^{1/p} 
< \infty 
$
%\end{equation*}
if $1 \le p < \infty$ 
or   
$\| f \|_{L^\infty (E)} 
= 
\mathrm{ess}\, \sup_{x\in E} |f(x)|
< \infty$ if $p = \infty$. 
We also use the notation 
$\| f \|_{L^p(E)} = \| f(x) \|_{L^p_{x}(E)} $ 
when we want to indicate the variable explicitly.

The uniformly local $L^2$ space, denoted by 
$L^2_{ul} (\R^d)$, consists of 
all those measurable functions $f$ on 
$\R^d$ such that 
\begin{equation*}
\| f \|_{L^2_{ul} (\R^d) } 
= \sup_{\nu \in \Z^d}
\left( \int_{[-1/2, 1/2)^d} 
\big| f(x+\nu) 
\big|^2 \, dx 
\right)^{1/2}
< \infty 
\end{equation*}
(this notion can be found in 
\cite[Definition 2.3]{kato 1975}).

%For $s\in\R$, we denote the Sobolev space $W^s$
%by 
%$W^s (\R^n) = 
%\{ f \in \calS^\prime (\R^n) : 
%	\| f \|_{W^s} 
%= \| \langle \cdot \rangle^s \widehat{f} \|_{L^2}
% < \infty \}$.
%

Let $\K$ be a countable set. 
We define the sequence spaces 
$\ell^q (\K)$ and $\ell^{q, \infty} (\K)$ 
as follows.  
The space $\ell^q (\K)$, $1 \le q \le \infty$, 
consists of all those 
complex sequences $a=\{a_k\}_{k\in \K}$ 
such that 
$ \| a \|_{ \ell^q (\K)} 
= 
\left( \sum_{ k \in \K } 
| a_k |^q \right)^{ 1/q } <\infty$ 
if $1 \leq q < \infty$  
or 
$\| a \|_{ \ell^\infty (\K)} 
= \sup_{k \in \K} |a_k| < \infty$ 
if $q = \infty$.
For $1\le q<\infty$, 
the space 
$\ell^{ q,\infty }(\K)$ is 
the set of all those complex 
sequences 
$a= \{ a_k \}_{ k \in \K }$ such that
\begin{equation*}
\| a \|_{\ell^{q,\infty} (\K) } 
=\sup_{t>0} 
\big\{ t\, 
\sharp \big( \{ k \in \K : | a_k | > t \} 
\big)^{1/q} \big\} < \infty,
\end{equation*}
where $\sharp$ denotes the cardinality 
of a set. 
%For $q>1$, there is a norm on the 
%space equivalent to $\| \cdot \|_{\ell^{q,\infty}}$. 
%In fact, if we put
%\begin{equation*}
%\| a_k \|_{\ell_\ast^{q,\infty}} 
%= \sup_{0<\sharp(E) <\infty} 
%\sharp(E)^{1/q-1} \sum_{k\in E} |a_k|,
%\end{equation*}
%then $\| a_k \|_{\ell^{q,\infty}} 
%\leq \| a_k \|_{\ell^{q,\infty}_\ast} 
%\leq C_q \| a_k \|_{\ell^{q,\infty}}$
%(see Grafakos \cite[Exercises 1.1.12]{grafakos 2014}).
Sometimes we write 
$\| a \|_{\ell^{q}} = 
\| a_k \|_{\ell^{q}_k }$ or 
$\| a \|_{\ell^{q,\infty}} = 
\| a_k \|_{\ell^{q,\infty}_k }$.  
If $\K=\Z^n$, 
we usually write $ \ell^q $ or $\ell^{ q, \infty }$ 
for 
$\ell^q (\Z^n)$  
or $\ell^{q, \infty} (\Z^n)$.

Let $X,Y,Z$ be function spaces.  
We denote the mixed norm by
\begin{equation*}
\label{normXYZ}
\| f (x,y,z) \|_{ X_x Y_y Z_z } 
= 
\bigg\| \big\| \| f (x,y,z) 
\|_{ X_x } \big\|_{ Y_y } \bigg\|_{ Z_z }.
\end{equation*} 
(Here pay special attention to the order 
of taking norms.)  
We shall use these mixed norms for  
$X, Y, Z$ being $L^p$ or $\ell^p$.

%%%=============================

\subsection{Modulation spaces} \label{secmodsp}

We give the definition of modulation spaces which were introduced 
by Feichtinger \cite{feichtinger 1983, feichtinger 2006}
(see also Gr\"ochenig \cite{grochenig 2001}). 
Let $\varphi \in \calS(\R^d)$ satisfy that
$\supp \varphi \subset [-1,1]^{d}$ and
$\sum _{k\in \Z^{d}} \varphi (\xi - k) = 1$
for any $\xi \in \R^d$.
Then, for $1 \leq p,q \leq \infty$, 
the modulation space $M^{p,q}(\R^d)$ 
consists of
all $f \in \calS'(\R^d)$ such that 
\begin{equation*}
\| f \|_{M^{p,q}} = 
\big\|\varphi(D-k) f (x) \big\|_{L^{p}_{x}(\R^d) \ell^{q}_{k}(\Z^d) }
< \infty.
\end{equation*}

We note that the definition of modulation spaces 
is independent of the choice of the function $\varphi$.
If $p_{1}\leq p_{2}$ and
$q_{1}\leq q_{2}$, $M^{p_{1},q_{1}} \hookrightarrow M^{p_{2},q_{2}}$. 
We have $M^{2,2} = L^2$, 
and $M^{p,1} \hookrightarrow L^{p} \hookrightarrow M^{p,\infty }$ for $1 \leq p \leq \infty $. 
For more details, see also, e.g.,
\cite{kobayashi 2006, wang hudzik 2007}.

%%%=====================================
\subsection{Local Hardy space $h^1$} 
\label{sechardy}

We recall the definition of the local Hardy space 
$h^1(\R^n)$.

Let $\varphi \in \calS(\R^n)$ be such that
$\int_{\R^n}\varphi(x)\, dx \neq 0$. 
Then, the local Hardy space $h^1(\R^n)$ 
consists of
all $f \in \calS'(\R^n)$ such that 
$\|f\|_{h^1}= \|\sup_{0<t<1}|\varphi_t*f| \|_{L^1}
<\infty$,
where $\varphi_t(x)=t^{-n}\varphi(x/t)$.
It is known that $h^1(\R^n)$
does not depend on the choice of the function $\varphi$,
and that $h^1(\R^n) \hookrightarrow L^1(\R^n)$. 
See Goldberg \cite{goldberg 1979} for more details about 
$h^1$. 

%%%=====================================
\subsection{Amalgam spaces} 
\label{secamalgam}

For $1 \leq p,q \leq \infty$, 
the amalgam space 
$(L^p,\ell^q)(\R^n)$ is 
defined to be the set of all 
those measurable functions $f$ on 
$\R^n$ such that 
\begin{equation*}
\| f \|_{ (L^p,\ell^q) (\R^n)} 
=
 \| f(x+\nu) \|_{L^p_x (Q) \ell^q_{\nu}(\Z^n) }
=\left\{ \sum_{\nu \in \Z^n}
\left( \int_{Q} \big| f(x+\nu) \big|^p \, dx 
\right)^{q/p} \right\}^{1/q} 
< \infty  
\end{equation*}
with usual modification when $p$ or $q$ is infinity.  
Obviously, $(L^p,\ell^p) = L^p$ 
and $(L^2, \ell^\infty) = L^2_{ul}$. 
If $p_1 \geq p_2$ and $q_1 \leq q_2$, 
then 
$(L^{p_1}, \ell^{q_1}) 
\hookrightarrow (L^{p_2},\ell^{q_2}) $.
In particular, 
$(L^2,\ell^r) \hookrightarrow L^r$ 
for $1 \leq r \leq 2$. 
In the case $r=1$, the stronger embedding 
$(L^2,\ell^1) \hookrightarrow h^1$ holds
(see \cite[Section 2.3]{KMT-arxiv-2}). 
See Fournier--Stewart \cite{fournier stewart 1985} 
and Holland \cite{holland 1975} for more 
properties of amalgam spaces.
We end this subsection with noting the following,
which was proved in \cite[Lemma 2.1]{KMT-arxiv-2}.

\begin{lem}\label{lemequivalentamalgam}
Let $1 \leq p,q \leq \infty$. 
If $L> n/ \min (p,q)$ and if 
$g$ is a measuarable function on $\R^n$ such that 
\begin{equation*}
c \ichi_{Q} (x) \le |g(x)| \le c^{-1} \langle x \rangle ^{-L} 
\end{equation*}
with some positive constant $c$, then 
\begin{equation*}
\| f \|_{ (L^p,\ell^q) (\R^{n})} 
\approx 
\left\| 
g(x-\nu ) f(x) \right\|_{L^p_x(\R^{n}) \ell^q_{\nu}(\Z^{n}) }.
\end{equation*}
\end{lem}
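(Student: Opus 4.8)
The plan is to prove the two-sided estimate by comparing both quantities to the local pieces $b_\nu := \| f \|_{L^p(\nu+Q)}$, for which $\| f \|_{(L^p,\ell^q)} = \| b_\nu \|_{\ell^q_\nu}$ holds by definition, and to set $a_\nu := \| g(\cdot - \nu) f \|_{L^p(\R^n)}$, so that the right-hand side of the asserted equivalence is exactly $\| a_\nu \|_{\ell^q_\nu}$. The bound $\| f\|_{(L^p,\ell^q)} \lesssim \| g(x-\nu)f(x)\|_{L^p_x \ell^q_\nu}$ is then immediate from the hypothesis $|g(x)| \ge c\,\ichi_Q(x)$: translating gives $|g(x-\nu)| \ge c\,\ichi_{\nu+Q}(x)$, whence $a_\nu^p \ge c^p \int_{\nu+Q}|f|^p = c^p\,b_\nu^p$ for every $\nu$, and taking $\ell^q_\nu$ norms closes this direction. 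All the work is therefore in the reverse inequality, where the decay bound $|g(x)| \le c^{-1}\langle x\rangle^{-L}$ must be exploited.

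For the reverse direction I would first localize. Using the partition \eqref{cubicdiscretization} I write $a_\nu^p = \sum_{\mu\in\Z^n} \int_{\mu+Q} |g(x-\nu)f(x)|^p\, dx$. Since $|y|\le \sqrt{n}/2$ for $y\in Q$, one has $\langle x-\nu\rangle \approx \langle \mu-\nu\rangle$ uniformly for $x\in\mu+Q$, so the decay hypothesis yields $|g(x-\nu)| \lesssim \langle\mu-\nu\rangle^{-L}$ there. Consequently $a_\nu^p \lesssim \sum_\mu \langle\mu-\nu\rangle^{-Lp}\, b_\mu^p$, and the whole matter is reduced to the discrete mixed-norm estimate
\[
\Big\| \big( {\textstyle\sum_\mu \langle\mu-\nu\rangle^{-Lp}\, b_\mu^p} \big)^{1/p} \Big\|_{\ell^q_\nu} \lesssim \| b_\mu \|_{\ell^q_\mu}.
\]

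To prove this I would write $B_\mu := b_\mu^p$ and $w_j := \langle j\rangle^{-Lp}$, so that the inner sum is the convolution $(w * B)_\nu$ and the left side equals $\| w*B\|_{\ell^{q/p}}^{1/p}$. When $q\ge p$ the exponent $q/p\ge 1$ and Young's inequality for discrete convolutions gives $\| w*B\|_{\ell^{q/p}} \le \| w\|_{\ell^1}\| B\|_{\ell^{q/p}}$, where $\|w\|_{\ell^1} = \sum_j \langle j\rangle^{-Lp} < \infty$ precisely because $Lp>n$, i.e.\ $L>n/p = n/\min(p,q)$. When $q<p$ the exponent $r:=q/p$ lies in $(0,1)$ and $\ell^r$ is only a quasi-norm; here I would instead use the elementary bound $(\sum_i \alpha_i)^r \le \sum_i \alpha_i^r$ valid for nonnegative $\alpha_i$ and $0<r\le 1$, which upgrades to $\big\| \sum_\mu c_\mu\big\|_{\ell^r}^r \le \sum_\mu \| c_\mu\|_{\ell^r}^r$, and applying it to $c_\mu = w_{\cdot-\mu}B_\mu$ together with the translation invariance of $\| w\|_{\ell^r}$ gives $\| w*B\|_{\ell^r} \le \| w\|_{\ell^r}\| B\|_{\ell^r}$; now $\| w\|_{\ell^r} < \infty$ requires $Lpr = Lq > n$, that is $L>n/q = n/\min(p,q)$. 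Thus in both regimes the hypothesis $L>n/\min(p,q)$ is exactly the summability that is needed.

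The endpoint cases are handled by the same scheme with suprema replacing the corresponding sums: for $q=\infty$ one bounds $a_\nu^p \lesssim (\sup_\mu b_\mu^p)\sum_\mu\langle\mu-\nu\rangle^{-Lp}$, and for $p=\infty$ one estimates $\| g(\cdot-\nu)f\|_{L^\infty}$ by $\sup_\mu \langle\mu-\nu\rangle^{-L}b_\mu$ and dominates the supremum by an $\ell^q$-sum before applying Young. I expect the only genuine obstacle to be the regime $q<p$: because $\ell^{q/p}$ fails to be a normed space the usual Young inequality is unavailable, and the passage to the quasi-norm inequality is what forces the summability threshold to jump from $n/p$ to $n/q$ and explains the appearance of $\min(p,q)$ in the statement. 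The verification that $\langle x-\nu\rangle\approx\langle\mu-\nu\rangle$ on each cube $\mu+Q$ is routine and is the only other point deserving a word of justification.
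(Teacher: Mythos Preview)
The paper does not prove this lemma; it is quoted from \cite[Lemma~2.1]{KMT-arxiv-2}, so there is no in-paper argument to compare against. Your proof is correct and is the natural approach: the inequality $\|f\|_{(L^p,\ell^q)}\lesssim\|g(\cdot-\nu)f\|_{L^p\ell^q_\nu}$ follows immediately from $|g|\ge c\,\ichi_Q$, and for the reverse direction your localization yields $a_\nu^p\lesssim\sum_\mu\langle\mu-\nu\rangle^{-Lp}b_\mu^p$, after which the discrete convolution bound $\|w*B\|_{\ell^{q/p}}\lesssim\|B\|_{\ell^{q/p}}$ with $w_j=\langle j\rangle^{-Lp}$ is handled by Young's inequality when $q\ge p$ and by the subadditivity of $t\mapsto t^{q/p}$ when $q<p$. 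This correctly identifies why the threshold is $L>n/\min(p,q)$, and your endpoint cases $p=\infty$, $q=\infty$ are also handled correctly.
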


%%%=============================

\section{Lemmas}\label{sectionLemma}

In this section, we prepare several lemmas.
We denote by $S$ the operator
\begin{equation*}
S (f) (x) 
= \int_{\R^n} \frac{ |f(y)| }{ \langle x-y \rangle^{n+1} } dy.
\end{equation*}
We have the following facts,
which was proved in \cite[Lemmas 4.1 and 4.3]{KMT-arxiv-2}.

\begin{lem}\label{propertiesofS} 
Let $1 \leq p \leq \infty$.
The following (1)--(3) 
hold for all nonnegative measurable 
functions $f, g$ on $\R^n$. 
\begin{enumerate}
\setlength{\itemindent}{0pt} 
\setlength{\itemsep}{3pt} 
\item %(1) 
$S(f \ast g)(x) = \big( S(f) \ast g \big)(x) =  
\big( f \ast S(g) \big)(x)$.
\item %(2) 
$S(f)(x) \approx S(f)(y)$
for $x,y\in \R^n$ such that $|x-y|\lesssim1$.
\item %(3) 
$\| S(f)(\nu) \|_{\ell^p_\nu} \approx \| S(f)(x) \|_{L^p_x}$.
\item %(4)
Let $\varphi$ be a function in $\calS(\R^n)$
with compact support.
Then, $| \varphi(D-\nu) f(x) |^{2}
\lesssim S( | \varphi(D-\nu) f |^{2} )(x)$
for any $f \in \calS(\R^{n})$, $\nu\in\Z^n$, and $x\in\R^n$.
\end{enumerate}
\end{lem}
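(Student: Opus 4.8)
The plan is to observe first that $S$ is nothing but convolution against the fixed nonnegative kernel $K(x)=\langle x\rangle^{-(n+1)}$, namely $S(f)=|f|\ast K$, and that $K\in L^1(\R^n)$ since $n+1>n$. With this reformulation, parts (1)--(3) are soft consequences of elementary properties of convolution and of the slow variation of $K$, while part (4) is the genuine point and rests on the band-limitedness of $\varphi(D-\nu)f$.

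For part (1), since $f,g\ge 0$ all the functions involved are nonnegative, so $S(f\ast g)=(f\ast g)\ast K$ and $S(f)=f\ast K$, and the three asserted expressions are equal by the commutativity and associativity of convolution; Tonelli's theorem justifies every interchange of the order of integration precisely because the integrands are nonnegative, so no convergence hypothesis is needed. For part (2), I would use the Peetre-type inequality: if $|x-y|\lesssim 1$ then $\langle x-z\rangle \lesssim \langle x-y\rangle\langle y-z\rangle\lesssim \langle y-z\rangle$ uniformly in $z$, and symmetrically, whence $K(x-z)\approx K(y-z)$ uniformly in $z$. Multiplying by $|f(z)|$ and integrating in $z$ gives $S(f)(x)\approx S(f)(y)$.

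Part (3) will then follow from part (2). Writing the $L^p$ norm as a sum over the cubes $\nu+Q$ as in \eqref{cubicdiscretization}, for $x\in Q$ one has $|x|\lesssim 1$, so by (2) $S(f)(x+\nu)\approx S(f)(\nu)$; integrating over $Q$ (of unit measure) gives $\int_Q |S(f)(x+\nu)|^p\,dx\approx |S(f)(\nu)|^p$, and summing in $\nu$ yields $\|S(f)(x)\|_{L^p_x}\approx\|S(f)(\nu)\|_{\ell^p_\nu}$; the case $p=\infty$ is identical with suprema replacing sums and integrals, using that every $x$ lies within distance $\lesssim 1$ of some integer point.

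The heart of the matter is part (4). Set $g=\varphi(D-\nu)f$. Since $\varphi$ has compact support and $f\in\calS$, the Fourier transform $\widehat g=\varphi(\cdot-\nu)\widehat f$ is a compactly supported Schwartz function, supported in $\nu+\supp\varphi$; thus $g$ is band-limited. The plan is to reproduce $g$ by convolution: fix $\Theta\in\calS$ with $\Theta\equiv 1$ on $\supp\varphi$ and set $\widehat\Psi(\xi)=\Theta(\xi-\nu)$, so that $\widehat\Psi\equiv1$ on $\supp\widehat g$ and hence $g=g\ast\Psi$. Here $\Psi(x)=e^{i\nu\cdot x}\rho(x)$ with $\rho=\calFi\Theta\in\calS$ independent of $\nu$; in particular $|\Psi|=|\rho|$ does not depend on $\nu$, which is what will make all constants uniform in $\nu$. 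From $g=g\ast\Psi$ I get $|g(x)|\le\int|g(x-y)|\,|\rho(y)|\,dy$, and an application of the Cauchy--Schwarz inequality after the splitting $|\rho|=|\rho|^{1/2}|\rho|^{1/2}$ gives
\[
|g(x)|^2\le \Big(\int|\rho(y)|\,dy\Big)\int |g(x-y)|^2\,|\rho(y)|\,dy \lesssim \big(|g|^2\ast|\rho|\big)(x).
\]
Finally, since $\rho\in\calS$ satisfies $|\rho(y)|\lesssim\langle y\rangle^{-(n+1)}=K(y)$, I conclude $\big(|g|^2\ast|\rho|\big)(x)\lesssim\big(|g|^2\ast K\big)(x)=S(|g|^2)(x)$, which is the claim. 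The only point requiring care is the uniformity of the implicit constants in $\nu$, and this is guaranteed because $\rho$—hence $\int|\rho|$ and the bound $|\rho|\lesssim K$—is chosen once and for all, the $\nu$-dependence entering only through the unimodular factor $e^{i\nu\cdot x}$.
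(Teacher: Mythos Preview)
Your proof is correct. The paper does not actually prove this lemma itself but cites \cite[Lemmas 4.1 and 4.3]{KMT-arxiv-2}; your arguments are the standard ones and match what one would expect to find there, in particular the reproduction-by-convolution and Cauchy--Schwarz trick for part (4), with the key observation that the modulation $e^{i\nu\cdot x}$ carries all the $\nu$-dependence of $\Psi$ so that $|\Psi|=|\rho|$ is uniform in $\nu$.
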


The following is proved in \cite[Proposition 3.4]{KMT-arxiv-2} 
(see also \cite[Proposition 3.3]{KMT-arXiv}).

\begin{lem}\label{productLweak}
Let $2<p_1,p_2 < \infty$, 
$1/p_{1} + 1/p_{2} = 1/2$, 
and let 
$f_{j} \in \ell^{p_{j}, \infty}(\Z^d)$
be nonnegative sequences
for $j=1,2$. 
Then, 
\begin{align*}
\sum_{\nu_1, \nu_2 \in \Z^n} 
f_1(\nu_1) f_2(\nu_2) 
A_0(\nu_1+ \nu_2) 
\prod_{j=1,2} A_j(\nu_j)
\lesssim 
\|f_{1}\|_{ \ell^{p_{1},\infty} }
\|f_{2}\|_{ \ell^{p_{2},\infty} }
\prod_{j=0,1,2} \|A_j\|_{\ell^{2} }.
\end{align*}
\end{lem}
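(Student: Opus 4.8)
The plan is to prove Lemma \ref{productLweak} by a dyadic decomposition of the weak-$\ell^p$ sequences together with a Cauchy--Schwarz argument controlled by the level-set cardinality bounds. First I would reduce matters to the case where $f_1$ and $f_2$ are (finite) linear combinations of characteristic functions of their level sets. Concretely, for $j=1,2$ write $f_j = \sum_{\ell_j \in \Z} f_j^{(\ell_j)}$ where $f_j^{(\ell_j)}(\nu_j) = f_j(\nu_j)\, \ichi_{E_{\ell_j}^{(j)}}(\nu_j)$ and $E_{\ell_j}^{(j)} = \{\nu_j : 2^{\ell_j} < f_j(\nu_j) \le 2^{\ell_j+1}\}$. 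The weak-$\ell^{p_j,\infty}$ condition gives the crucial cardinality bound $\sharp\, E_{\ell_j}^{(j)} \lesssim \|f_j\|_{\ell^{p_j,\infty}}^{p_j}\, 2^{-\ell_j p_j}$, and on $E_{\ell_j}^{(j)}$ the values of $f_j$ are comparable to $2^{\ell_j}$.

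Next I would substitute this decomposition into the left-hand side, so the sum becomes
\begin{equation*}
\sum_{\ell_1,\ell_2 \in \Z} 2^{\ell_1} 2^{\ell_2}
\sum_{\nu_1 \in E_{\ell_1}^{(1)}} \sum_{\nu_2 \in E_{\ell_2}^{(2)}}
A_0(\nu_1+\nu_2)\, A_1(\nu_1)\, A_2(\nu_2).
\end{equation*}
For the inner double sum over $\nu_1,\nu_2$ I would apply Cauchy--Schwarz. The natural pairing is to group $A_0(\nu_1+\nu_2) A_1(\nu_1)$ against $A_2(\nu_2)$, or to exploit that $\sum_{\nu_1,\nu_2} A_0(\nu_1+\nu_2)^2 A_1(\nu_1)^2 \le \|A_0\|_{\ell^2}^2 \|A_1\|_{\ell^2}^2$ via Young's inequality (convolution structure in $\nu_1+\nu_2$). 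Combined with the restriction of $\nu_2$ to $E_{\ell_2}^{(2)}$, Cauchy--Schwarz in $\nu_2$ produces a factor $(\sharp\, E_{\ell_2}^{(2)})^{1/2}$, and symmetrically the $\nu_1$-restriction produces $(\sharp\, E_{\ell_1}^{(1)})^{1/2}$. Carrying this out should bound the inner sum by roughly $(\sharp\, E_{\ell_1}^{(1)})^{1/2} (\sharp\, E_{\ell_2}^{(2)})^{1/2} \prod_{j=0,1,2}\|A_j\|_{\ell^2}$.

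Inserting the cardinality bounds then yields, for each $(\ell_1,\ell_2)$, a contribution controlled by
\begin{equation*}
2^{\ell_1}2^{\ell_2}\,
\|f_1\|_{\ell^{p_1,\infty}}^{p_1/2} 2^{-\ell_1 p_1/2}\,
\|f_2\|_{\ell^{p_2,\infty}}^{p_2/2} 2^{-\ell_2 p_2/2}
\prod_{j=0,1,2}\|A_j\|_{\ell^2},
\end{equation*}
so the summability over $\ell_1$ reduces to the geometric series $\sum_{\ell_1} 2^{\ell_1(1-p_1/2)}$, which converges precisely because $p_1 > 2$; likewise for $\ell_2$ since $p_2 > 2$. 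The exponent bookkeeping must reproduce the powers $\|f_j\|_{\ell^{p_j,\infty}}^{1}$ claimed in the statement, which is where the hypothesis $1/p_1 + 1/p_2 = 1/2$ enters to balance the two geometric sums against the $2^{\ell_1}2^{\ell_2}$ weight.

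The main obstacle I anticipate is the double summation over the dyadic levels: a naive application of Cauchy--Schwarz that extracts $(\sharp E_{\ell_1}^{(1)})^{1/2}(\sharp E_{\ell_2}^{(2)})^{1/2}$ independently may be lossy, and one must ensure the $\ell_2$-norms $\|A_j\|_{\ell^2}$ are not over-counted across the $(\ell_1,\ell_2)$ sum. The delicate point is that the geometric series in $\ell_1$ and $\ell_2$ converge \emph{individually} only after the convolution/Young step has been used to decouple $A_0$ cleanly; I would need to arrange the Cauchy--Schwarz so that each $\|A_j\|_{\ell^2}$ appears once and the residual dependence on $(\ell_1,\ell_2)$ is purely the summable geometric factor. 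An alternative, possibly cleaner route avoiding the level-set bookkeeping is to interpolate: the endpoint bilinear estimate with $A_j \in \ell^2$ and $f_j \in \ell^{p_j}$ (strong norms) follows from Hölder and Young, and then real interpolation in the $f_j$ slots upgrades $\ell^{p_j}$ to $\ell^{p_j,\infty}$; I would fall back on this if the direct dyadic argument becomes unwieldy.
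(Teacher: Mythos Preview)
The paper does not actually prove this lemma; it is quoted verbatim from \cite[Proposition~3.4]{KMT-arxiv-2} (with a pointer also to \cite[Proposition~3.3]{KMT-arXiv}), so there is no in-paper argument to compare against. I therefore comment only on the soundness of your plan.

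Your dyadic scheme has a genuine gap at the summation step. After normalising $\|f_j\|_{\ell^{p_j,\infty}}=1$ one has $\|f_j\|_{\ell^\infty}\le 1$, so the nonempty level sets $E_{\ell_j}^{(j)}$ occur only for $\ell_j<0$. Your extracted factor is $2^{\ell_j(1-p_j/2)}$; since $1-p_j/2<0$ and $\ell_j<0$, this \emph{grows} as $\ell_j\to-\infty$, and $\sum_{\ell_j<0}2^{\ell_j(1-p_j/2)}$ diverges --- your sentence ``converges precisely because $p_1>2$'' has the direction of $\ell_1$ reversed. Even the sharper single-cardinality bound $S_{\ell_1,\ell_2}\le\min\{(\sharp E_{\ell_1}^{(1)})^{1/2},(\sharp E_{\ell_2}^{(2)})^{1/2}\}\prod_j\|A_j\|_{\ell^2}$ makes the double sum exactly borderline under $1/p_1+1/p_2=1/2$ and diverges logarithmically. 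Note also that your argument as written uses only $p_j>2$ individually and never invokes $1/p_1+1/p_2=1/2$; homogeneity in $\|f_j\|$ is automatic by scaling and cannot be where that hypothesis enters. A correct level-set argument must retain the localised quantities $a_{\ell_1}=\|A_1\chi_{E_{\ell_1}^{(1)}}\|_{\ell^2}$ and $b_{\ell_2}=\|A_2\chi_{E_{\ell_2}^{(2)}}\|_{\ell^2}$ (which satisfy $\sum_\ell a_\ell^2=\|A_1\|_{\ell^2}^2$, etc.) and then control the bilinear form $\sum_{\ell_1,\ell_2}K(\ell_1,\ell_2)\,a_{\ell_1}b_{\ell_2}$ via a Schur test; the row and column sums of the kernel $K(\ell_1,\ell_2)=2^{\ell_1+\ell_2}\min\{2^{-\ell_1 p_1/2},2^{-\ell_2 p_2/2}\}$ are uniformly bounded \emph{precisely} when $1/p_1+1/p_2=1/2$, and that is where the hypothesis is actually used. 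Your Lorentz-space fallback can be made to work, but it requires O'Neil's convolution inequality (H\"older in Lorentz spaces gives $f_jA_j\in\ell^{q_j,2}$ with $1/q_j=1/p_j+1/2$, and then O'Neil yields $(f_1A_1)\ast(f_2A_2)\in\ell^{2,1}\hookrightarrow\ell^2$), which is more than ``H\"older and Young plus real interpolation in the $f_j$ slots''; one-variable-at-a-time real interpolation from the strong-$\ell^{p_j}$ estimate does not reach the critical line.
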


We end this section by mentioning a lemma which
can be found in Sugimoto \cite[Lemma 2.2.1]{sugimoto 1988 JMSJ}. 
The explicit proof is given in \cite[Lemma 4.4]{KMT-arxiv-2}.

\begin{lem}
\label{unifdecom}
There exist the functions $\kappa \in \calS(\R^n)$ and $\chi \in \calS(\R^n)$ satisfying that
$\supp \kappa \subset [-1,1]^n$, $\supp \widehat \chi \subset B_1$, 
$| \chi | \geq c > 0$ on $[-1,1]^n$
and
\begin{equation*}
\sum_{\nu\in\Z^n} \kappa (\xi - \nu) \chi (\xi - \nu ) = 1, \quad \xi \in \R^n.
\end{equation*}
\end{lem}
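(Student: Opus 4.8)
The plan is to construct the two functions explicitly. I would first produce a smooth partition of unity adapted to the integer lattice, then manufacture a band-limited factor that stays bounded away from zero on the cube, and finally obtain $\kappa$ by division.

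\emph{Step 1 (a lattice partition of unity).} Choose $\rho \in C^\infty_c(\R^n)$ with $\supp \rho \subset (-1,1)^n$, $\rho \geq 0$, and $\rho > 0$ on $[-1/2,1/2]^n$. Since the cubes $\mu + [-1/2,1/2]^n$, $\mu \in \Z^n$, cover $\R^n$, the $\Z^n$-periodic function $\Phi(\xi) = \sum_{\mu \in \Z^n} \rho(\xi - \mu)$ is smooth and strictly positive on all of $\R^n$, so $1/\Phi$ is smooth and $\Z^n$-periodic. Setting $\phi = \rho/\Phi \in C^\infty_c(\R^n)$ we have $\supp \phi \subset (-1,1)^n$ and, by periodicity of $1/\Phi$,
\[
\sum_{\nu \in \Z^n} \phi(\xi - \nu) = \frac{1}{\Phi(\xi)} \sum_{\nu \in \Z^n} \rho(\xi - \nu) = 1, \qquad \xi \in \R^n .
\]

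\emph{Step 2 (a band-limited factor bounded below on the cube).} Fix $\psi \in C^\infty_c(\R^n)$ that is real, even, nonnegative and not identically zero, with $\supp \psi \subset B_1$, and for $\delta \in (0,1]$ set $\chi := (\calFi \psi)(\delta\, \cdot)$. Then $\chi \in \calS(\R^n)$ is real (because $\psi$ is real and even), and $\widehat{\chi}(\xi) = \delta^{-n} \psi(\xi/\delta)$ has support in $B_{\delta} \subset B_1$. Since $\calFi \psi$ is continuous with $(\calFi \psi)(0) = (2\pi)^{-n} \int_{\R^n} \psi \, d\xi > 0$, the function $\chi(x) = (\calFi \psi)(\delta x)$ converges to this positive value uniformly on the compact set $[-1,1]^n$ as $\delta \to 0$. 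Hence, fixing $\delta$ small enough, we obtain $\chi \in \calS(\R^n)$ with $\supp \widehat{\chi} \subset B_1$ and $\chi \geq c > 0$ on an open neighborhood $U$ of $[-1,1]^n$; in particular $|\chi| \geq c$ on $[-1,1]^n$.

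\emph{Step 3 (division).} Because $\chi$ is continuous and nonvanishing on the open set $U \supset [-1,1]^n \supset \supp \phi$, the quotient $\kappa := \phi/\chi$ is smooth on $U$ and supported in the compact set $\supp \phi \subset U$; extending it by zero gives $\kappa \in C^\infty_c(\R^n) \subset \calS(\R^n)$ with $\supp \kappa = \supp \phi \subset [-1,1]^n$, and $\kappa \chi = \phi$ on all of $\R^n$. Combining with Step 1,
\[
\sum_{\nu \in \Z^n} \kappa(\xi - \nu) \chi(\xi - \nu) = \sum_{\nu \in \Z^n} \phi(\xi - \nu) = 1, \qquad \xi \in \R^n ,
\]
which is the desired identity.

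The only genuinely delicate point is Step 2. The constraint $\supp \widehat{\chi} \subset B_1$ forces $\chi$ to be a fixed-bandwidth, hence oscillatory, function, and a priori such a function may well vanish inside $[-1,1]^n$, so the lower bound $|\chi| \geq c$ is not automatic. The device that reconciles the band-limitation with this lower bound is to concentrate the spectrum of $\chi$ near the origin by the scaling $\delta \to 0$: a low-frequency band-limited function is uniformly close to its (positive) value at the origin on any prescribed compact set. Everything else — the standard normalization $\phi = \rho/\Phi$ producing the partition of unity, and the smoothness of the quotient $\phi/\chi$ away from the zero set of $\chi$ — is routine.
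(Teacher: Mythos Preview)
Your proof is correct. The paper itself does not supply a proof of this lemma: it simply records that the statement goes back to Sugimoto \cite{sugimoto 1988 JMSJ} and that an explicit proof can be found in \cite{KMT-arxiv-2}. Your construction --- build a compactly supported partition of unity $\phi$, manufacture a band-limited $\chi$ that stays positive on $[-1,1]^n$ by pushing its spectrum toward the origin via the dilation $\delta\to 0$, and then set $\kappa=\phi/\chi$ --- is precisely the standard route and matches the argument in those references; there is nothing to add or correct.
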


%%%=============================
%%%=============================

\section{Main results}\label{sectionMain}
\subsection{Key proposition} 
In this subsection we prove the following key proposition.
The proof is almost the same as in the proof of \cite[Proposition 5.1]{KMT-arxiv-2},
and the essential idea goes back to \cite{boulkhemair 1995}.

%%=================================
\begin{prop}
\label{main-prop}
Let $s_1, s_2 \in (0,\infty)$ satisfy $s_1+s_2=n/2$,
and let 
$R_{0}, R_{1}, R_{2} \in [1, \infty)$.
Suppose $\sigma$ is 
a bounded continuous function on 
$(\R^n)^{3}$ such that  
$\supp \calF \sigma \subset 
B_{R_0} \times B_{R_1} \times B_{R_2}$. 
Then 
\begin{equation*}
\left\| T_{ \sigma } \right\|_{H^{s_1} \times H^{s_2} \to (L^2, \ell^{1})}
\lesssim
\left( R_{0} R_{1} R_{2} \right)^{n/2}
\| \sigma \|_{ L^{2}_{ul} ((\R^{n})^{3})}. 
\end{equation*}
\end{prop}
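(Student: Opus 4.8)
The plan is to decompose the operator using the uniform partition from Lemma~\ref{unifdecom} and the band-limited hypothesis on $\calF\sigma$, reducing the problem to a bilinear sum over $\Z^n$-lattice pieces that can be controlled by Lemma~\ref{productLweak}. First I would fix $\kappa, \chi$ as in Lemma~\ref{unifdecom} and insert the resolution of identity $\sum_{\mu} \kappa(D_{x}-\mu)\chi(D_{x}-\mu) = 1$ in the output variable $x$, together with analogous Littlewood--Paley-type decompositions $\sum_{\nu_1}\varphi(D-\nu_1)$ and $\sum_{\nu_2}\varphi(D-\nu_2)$ acting on $f_1$ and $f_2$. Because $\supp\calF\sigma$ lives in $B_{R_0}\times B_{R_1}\times B_{R_2}$, writing $\sigma$ via its Fourier inversion lets one see that in each localized piece the frequencies $\mu, \nu_1, \nu_2$ are essentially constrained by $\mu \sim \nu_1+\nu_2$ up to a ball of radius $\lesssim R_0+R_1+R_2$, so the triple sum collapses to a two-parameter sum with a spreading factor costing a power of $R_0R_1R_2$.

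Next I would estimate the $(L^2,\ell^1)$-norm of $T_\sigma(f_1,f_2)$ by the equivalent expression furnished by Lemma~\ref{lemequivalentamalgam} with $p=2$, $q=1$, i.e.\ bound $\| g(x-\mu) T_\sigma(f_1,f_2)(x) \|_{L^2_x \ell^1_\mu}$ for a suitable bump $g$. The kernel of each localized piece of $T_\sigma$ can be written against $\calF\sigma$, and the $L^2_{ul}$-norm of $\sigma$ enters precisely because one integrates $|\calF\sigma|$ (or $|\sigma|$ after Plancherel in appropriate variables) over unit cubes and takes a supremum; the operator $S$ and parts (1)--(4) of Lemma~\ref{propertiesofS} are the tool to pass from pointwise bounds on $|\varphi(D-\nu_j)f_j|^2$ to their $S$-smoothed versions, which are what interact cleanly with convolution. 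After these manipulations one is left with a sum of the shape $\sum_{\nu_1,\nu_2} c(\nu_1)\, d(\nu_2)\, A_0(\nu_1+\nu_2) A_1(\nu_1) A_2(\nu_2)$, where $A_j$ are $\ell^2$-sequences built from $\|S(|\varphi(D-\nu_j)f_j|^2)\|^{1/2}$ and $c, d$ are the sequences encoding the Sobolev weights.

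The role of the Sobolev exponents is that $f_j \in H^{s_j}$ means $\{\langle\nu_j\rangle^{s_j} \|\varphi(D-\nu_j)f_j\|_{L^2}\}_{\nu_j} \in \ell^2$, hence the sequences $f_j(\nu_j) = \langle\nu_j\rangle^{-s_j}$ sit in $\ell^{p_j,\infty}$ exactly when $s_j p_j = n$, i.e.\ $p_j = n/s_j$; the constraint $s_1+s_2 = n/2$ is equivalent to $1/p_1+1/p_2 = 1/2$, which is the hypothesis of Lemma~\ref{productLweak}. Applying that lemma gives the bound $\lesssim \|f_1\|_{\ell^{p_1,\infty}}\|f_2\|_{\ell^{p_2,\infty}} \prod_{j=0,1,2}\|A_j\|_{\ell^2}$, and then parts (3) and (4) of Lemma~\ref{propertiesofS} convert $\|A_1\|_{\ell^2}\|A_2\|_{\ell^2}$ back into $\|f_1\|_{H^{s_1}}\|f_2\|_{H^{s_2}}$ (after absorbing the Sobolev weights appropriately) while $\|A_0\|_{\ell^2}$ together with the spreading over the ball of radius $\sim R_0+R_1+R_2$ produces the factor $(R_0R_1R_2)^{n/2}\|\sigma\|_{L^2_{ul}}$.

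I expect the main obstacle to be bookkeeping the band-limited structure precisely enough: one must verify that after localizing in all three frequency variables the symbol piece genuinely forces $\mu$ to lie within distance $\lesssim R_0+R_1+R_2$ of $\nu_1+\nu_2$, and that the number of relevant $\mu$ (and the overlap multiplicities of the $\kappa\chi$ partition) is controlled by a clean power of $R_0R_1R_2$ rather than a worse power; getting the exponent to be exactly $n/2$ on each $R_j$, and not something larger, is the delicate point, and it is where the choice to measure $\sigma$ in $L^2_{ul}$ (rather than $L^\infty$) pays off via Plancherel on each unit frequency cube. The rest — the convolution gymnastics with $S$ and the final summation — is routine given Lemmas~\ref{propertiesofS} and~\ref{productLweak}.
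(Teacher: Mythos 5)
Your outline matches the paper's strategy in most respects: the unit-cube frequency decomposition via $\kappa\chi$ from Lemma~\ref{unifdecom}, the observation that the output frequency of each piece is pinned near $\nu_1+\nu_2$, the use of Lemma~\ref{propertiesofS} to smooth $|\square_{\nu_j}f_j|^2$, the reduction to a sum of the form $\sum A_0(\nu_1+\nu_2)A_1(\nu_1)A_2(\nu_2)$ handled by Lemma~\ref{productLweak} with $p_j=n/s_j$, and the appearance of $\|\sigma\|_{L^2_{ul}}$ via Plancherel on unit cubes. However, the point you yourself flag as delicate --- the origin of the factor $(R_0R_1R_2)^{n/2}$ --- is misidentified, and this is a genuine gap rather than bookkeeping. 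First, the output frequency of $T_{\sigma_{\boldsymbol{\nu}}}(\square_{\nu_1}f_1,\square_{\nu_2}f_2)$ is constrained to a ball of radius $\lesssim R_0$ about $\nu_1+\nu_2$, not $R_0+R_1+R_2$: since the inputs are already localized to unit frequency cubes, only the support $B_{R_0}$ of $\calF_0\sigma(\cdot,\xi_1,\xi_2)$ spreads the output frequency, and this constraint contributes exactly $R_0^{n/2}$ (in the paper, via $\|\varphi((\zeta+\tau)/R_0)\|_{\ell^2_\tau}\approx R_0^{n/2}$ after dualizing against $g\in L^2$). Second, a spread over a single ball of radius $R_0+R_1+R_2$ could never produce the \emph{product} $(R_0R_1R_2)^{n/2}$; the factors $R_1^{n/2}$ and $R_2^{n/2}$ come from a mechanism absent from your sketch. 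The hypothesis $\supp\calF_{1,2}\sigma(x,\cdot,\cdot)\subset B_{R_1}\times B_{R_2}$ forces the kernel $\calF_{1,2}\sigma_{\boldsymbol{\nu}}(x,y_1-x,y_2-x)$ to be supported in $\{|y_j-x|\le 2R_j\}$, whence Cauchy--Schwarz in $(y_1,y_2)$ and Plancherel give
\begin{equation*}
\bigl|T_{\sigma_{\boldsymbol{\nu}}}(\square_{\nu_1}f_1,\square_{\nu_2}f_2)(x)\bigr|
\lesssim \bigl\|\sigma_{\boldsymbol{\nu}}(x,\cdot,\cdot)\bigr\|_{L^2}
\prod_{j=1,2}\Bigl\{\bigl(\ichi_{B_{2R_j}}*|\square_{\nu_j}f_j|^2\bigr)(x)\Bigr\}^{1/2},
\end{equation*}
and at the end $\|\ichi_{B_{2R_j}}*h\|_{L^1}\le\|\ichi_{B_{2R_j}}\|_{L^1}\|h\|_{L^1}\approx R_j^{n}\|h\|_{L^1}$ costs precisely $R_j^{n/2}$ after the square root. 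Without this step your argument has no way to convert the spatial spread of the kernel into the stated powers of $R_1,R_2$.

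A secondary omission: you do not explain how the outer $\ell^1_\mu$ norm of the amalgam space is summed. The paper dualizes the inner $L^2_x$ norm against $g$ with $\|g\|_{L^2}=1$, retains the decaying weight $\langle\nu_0-\mu\rangle^{-L}$ coming from $|\theta(x+\nu_0-\mu)|$, and only at the last step applies Cauchy--Schwarz in $\mu$, splitting $\langle\nu_0-\mu\rangle^{-L}$ between the two $A_j$ factors in a mixed $\ell^2_{\nu_j}\ell^4_{\nu_0}\ell^2_\mu$ norm. This is where the $\ell^1$ target (as opposed to $\ell^2$ or $\ell^\infty$) is actually earned, and it needs to be carried out explicitly.
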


%%======================================

\begin{proof}
We take a function $\theta \in \calS(\R^n)$ satisfying that 
$|\theta| \geq c >0$ on $Q=[-1/2,1/2)^n$ and 
$\supp \widehat \theta \subset B_1$.
Then, we have by Lemma \ref{lemequivalentamalgam}
\begin{equation*}
\left\| T_{ \sigma }( f_1, f_2 ) \right\|_{(L^2,\ell^{1})}
\approx
\left\| \theta (x-\mu) T_{ \sigma }( f_1, f_2 )(x) \right\|_{L^2_x(\R^n) \ell^{1}_{\mu}(\Z^n) },
\end{equation*}
and by duality
\begin{equation}
\label{saishodual}
\left\| T_{ \sigma }( f_1, f_2 ) \right\|_{(L^2,\ell^{1})}
\approx
\left\| \sup_{\|g\|_{L^2}=1} \left| \int_{\R^n} \theta (x-\mu) 
T_{ \sigma }( f_1, f_2 )(x) \, g(x) \, dx \right|\right\|_{ \ell^{1}_{\mu} }.
\end{equation}
Hence, in what follows we consider
\begin{equation*}
I = \int_{\R^n} \theta (x-\mu) T_{ \sigma }( f_1, f_2 )(x) \, g(x) \, dx
\end{equation*}
for any $\mu\in\Z^n$ and all $g\in L^2(\R^n)$.

Now, we rewrite the integral 
$I$ by the two steps below.
Firstly, by using Lemma \ref{unifdecom}, 
we decompose the symbol $\sigma$ as
\begin{align*}
\sigma ( x, \xi_1, \xi_2)
&=
\sum_{\boldsymbol{\nu}\in(\Z^n)^{2}} 
\sigma ( x, \xi_1, \xi_2 ) 
\kappa (\xi_{1} - \nu_{1}) \chi (\xi_{1} - \nu_{1} )
\kappa (\xi_{2} - \nu_{2}) \chi (\xi_{2} - \nu_{2} )
\\&=
\sum_{\boldsymbol{\nu}\in(\Z^n)^{2}} 
\sigma_{\boldsymbol{\nu}} ( x, \xi_1, \xi_2 ) 
\kappa (\xi_{1} - \nu_{1}) \kappa (\xi_{2} - \nu_{2}),
\end{align*}
where $\boldsymbol{\nu}=(\nu_1,\nu_2) \in (\Z^n)^2$ 
and we set
\[
\sigma_{\boldsymbol{\nu}} ( x, \xi_1, \xi_2 ) 
=
\sigma ( x, \xi_1, \xi_2 ) 
\chi (\xi_{1} - \nu_{1} )
\chi (\xi_{2} - \nu_{2} ).
\]
Denote
the Fourier multiplier operators 
$\kappa (D - \nu_{j})$
by $\square_{\nu_{j}}$, 
$j=1,2$.
Then, 
the integral $I$ is written as
\begin{equation}\label{decomposedI}
I=
\sum_{\boldsymbol{\nu}\in(\Z^n)^{2}} 
\int_{\R^n} \theta (x-\mu)
T_{ \sigma_{\boldsymbol{\nu}} } \big( \square_{\nu_1} f_1, \square_{\nu_2} f_2 \big)(x) 
g(x) 
\, dx .
\end{equation}
The idea of decomposing 
symbols by such
$\kappa$ and $\chi$ goes back to 
Sugimoto \cite{sugimoto 1988 JMSJ}.

Secondly, in the integral of \eqref{decomposedI}, 
we transfer the information of the Fourier transform of 
$\theta (\cdot-\mu) T_{ \sigma_{\boldsymbol{\nu}} }
( \square_{\nu_1}f_1,\square_{\nu_2}f_2 )$ to $g$.
Observe that
\begin{align*}
&
\calF \left[ T_{ \sigma_{\boldsymbol{\nu}} }( \square_{\nu_1}f_1, \square_{\nu_2}f_2 ) \right](\zeta)
\\&=
\frac{1}{(2\pi)^{2n}}
\int_{ (\R^{n})^{2} }
\big( \calF_0 \sigma_{\boldsymbol{\nu}} \big) \big( \zeta - (\xi_1 +\xi_2) , \xi_1, \xi_2 \big) 
\prod_{j=1,2} \kappa (\xi_{j}-\nu_{j}) \widehat{f_{j}}(\xi_{j})
\, d\xi_1 d\xi_2 .
\end{align*}
Then, combining this with the facts that
$\supp \calF_{0} \sigma_{\boldsymbol{\nu}} 
(\cdot, \xi_1, \xi_2 ) \subset B_{R_0}$
and $\supp \kappa(\cdot-\nu_{j}) 
\subset \nu_{j}+[-1,1]^n$, $j=1,2$, 
we see that
\begin{equation*}
\supp 
\calF \left[ T_{ \sigma_{\boldsymbol{\nu}} }( \square_{\nu_1}f_1, \square_{\nu_2}f_2 ) \right]
\subset 
\big\{ \zeta \in \R^n : | \zeta-(\nu_1 +\nu_2) | \lesssim R_0 \big\}.
\end{equation*}
Hence, since $\supp \widehat \theta \subset B_1$,
\begin{equation*}
\supp 
\calF \left[ \theta (\cdot-\mu)T_{ \sigma_{\boldsymbol{\nu}} }( \square_{\nu_1}f_1, \square_{\nu_2}f_2 ) \right]
\subset 
\big\{ \zeta \in \R^n : | \zeta-(\nu_1+\nu_2) | \lesssim R_0 \big\}
\end{equation*}
for any $\mu\in\Z^n$.
Taking a function $\varphi \in \calS(\R^n)$ 
satisfying that $\varphi = 1$ 
on $\{ \zeta\in\R^n: |\zeta|\lesssim 1\}$,
the integral $I$ given in \eqref{decomposedI} 
can be further rewritten as
\begin{equation}\label{decomposedFouriertransI}
I=
\sum_{\boldsymbol{\nu}\in(\Z^n)^2} \int_{\R^n} 
\theta (x-\mu) T_{ \sigma_{\boldsymbol{\nu}} }
( \square_{\nu_1}f_1, \square_{\nu_2}f_2 )(x) 
\, \varphi \left( \frac{D+\nu_1 + \nu_2}{R_0} \right) 
g (x) \, dx .
\end{equation}

Now, we shall actually estimate 
the newly rewritten integral $I$ 
in \eqref{decomposedFouriertransI}.
Since it holds from the facts
$\supp \calF_{1,2} \sigma (x, \cdot, \cdot)
\subset B_{R_1} \times B_{R_2}$
and $\supp \widehat{\chi} \subset B_1$
that
\[
\supp \calF_{1,2} \sigma_{\boldsymbol{\nu}} (x, \cdot, \cdot)
\subset  
B_{2R_1} \times B_{2R_2},
\]
we have
\begin{align*}
&
T_{ \sigma_{\boldsymbol{\nu}} }( \square_{\nu_1}f_1, \square_{\nu_2}f_2 )(x) 
\\&=
\frac{1}{(2\pi)^{2n}}
\int_{(\R^n)^2}
\big( \calF_{1,2} \sigma_{\boldsymbol{\nu}} \big) (x, y_1-x, y_2-x)
\prod_{j=1,2} \ichi_{B_{2R_{j}}}(x-y_{j}) \square_{\nu_{j}} f_{j} (y_{j}) 
\, dy_1 dy_2.
\end{align*}
Then, the Cauchy--Schwarz inequalities 
and the Plancherel theorem yield that
\begin{align*}
\left| T_{ \sigma_{\boldsymbol{\nu}} }( \square_{\nu_1}f_1, \square_{\nu_2}f_2 )(x) \right|
\lesssim
\big\| \sigma_{\boldsymbol{\nu}} (x,\xi_1, \xi_2) \big\|_{L^2_{\xi_1, \xi_2}}
\prod_{j=1,2}
\left\{ \left( \mathbf{1}_{B_{2R_{j}}} \ast \big| \square_{\nu_{j}} f_{j} \big|^2 \right) (x) \right\}^{1/2}
\end{align*}
for any ${\boldsymbol{\nu}} = (\nu_1, \nu_2) \in (\Z^n)^2$ and $x \in \R^n$.
From this, the integral $I$ is estimated as
\begin{align}
\label{Ibeforediscretize}
\begin{split}
|I| & \lesssim
\sum_{\boldsymbol{\nu}\in(\Z^n)^2} 
\int_{\R^n}|\theta (x-\mu)| \,
\big\| \sigma_{\boldsymbol{\nu}} (x,\xi_1, \xi_2) \big\|_{L^2_{\xi_1, \xi_2}}
\\
&\quad\times 
\prod_{j=1,2}
\left\{ \Big( \mathbf{1}_{B_{2R_{j}}} \ast \big|\square_{\nu_{j}} f_{j} \big|^2 \Big) (x) \right\}^{1/2}
\left| \varphi \left( \frac{D+\nu_1 + \nu_2}{R_0} \right) g (x) \right|
\, dx.
\end{split}
\end{align}
Next, we decompose the above integral over $x$
by using \eqref{cubicdiscretization}.
Then the inequality \eqref{Ibeforediscretize}
coincides with
\begin{align*}
|I| &\lesssim
\sum_{\nu_0\in\Z^n} 
\sum_{\boldsymbol{\nu}\in(\Z^n)^2} 
\int_{Q} |\theta (x+\nu_0-\mu)| \,
\big\| \sigma_{\boldsymbol{\nu}} (x+\nu_0,\xi_1, \xi_2) \big\|_{L^2_{\xi_1, \xi_2}}
\\
&\quad\times
\prod_{j=1,2}
\left\{ \Big( \mathbf{1}_{B_{2R_{j}}} \ast \big|\square_{\nu_{j}} f_{j} \big|^2 \Big) (x+\nu_0) \right\}^{1/2}
\left| \varphi \left( \frac{D+\nu_1 + \nu_2}{R_0} \right) g (x+\nu_0) \right|
\, dx.
\end{align*}
Observe that
$|\theta (x+\nu_0-\mu)| \lesssim \langle \nu_0-\mu \rangle^{-L}$ 
holds for any $x\in Q$ and some constant $L>0$ sufficiently large,
and
from Lemma \ref{propertiesofS} (4), (1), and (2) 
that
\begin{equation*}
\Big( \mathbf{1}_{B_{2R_{j}}} \ast 
\big|\square_{\nu_{j}} f_{j}\big|^2 \Big) (x+\nu_0)
\lesssim
%\Big( \mathbf{1}_{B_{2R_{j}}} \ast S \left(\big| \square_{\nu_{j}} f_{j} \big|^2\right) \Big) (x)
%\\
%&=
S \Big( \mathbf{1}_{B_{2R_{j}}} \ast 
\big|\square_{\nu_{j}} f_{j} \big|^2 \Big) (\nu_0)
\end{equation*}
for $x\in Q$.
Then, by the
Cauchy--Schwarz inequality for the integral over $x$,
\begin{align*}
|I| &\lesssim
\sum_{\nu_0\in\Z^n} 
\sum_{\boldsymbol{\nu}\in(\Z^n)^2} 
\langle \nu_0-\mu \rangle^{-L}
\prod_{j=1,2}
\left\{ S \Big( \mathbf{1}_{B_{2R_{j}}} \ast 
\big|\square_{\nu_{j}} f_{j} \big|^2 \Big) (\nu_0) \right\}^{1/2}
\\
&\quad \times 
\int_{Q} 
\big\| \sigma_{\boldsymbol{\nu}} (x+\nu_0,\xi_1, \xi_2) \big\|_{L^2_{\xi_1, \xi_2}}
\left| \varphi \left( \frac{D+\nu_1 +\nu_2}{R_0} \right) g (x+\nu_0) \right|
\, dx
\\
&\lesssim
\sum_{\nu_0\in\Z^n} 
\sum_{\boldsymbol{\nu}\in(\Z^n)^2} 
\langle \nu_0-\mu \rangle^{-L}
\prod_{j=1,2}
\left\{ S \Big( \mathbf{1}_{B_{2R_{j}}} \ast \big|\square_{\nu_{j}} f_{j} \big|^2 \Big) (\nu_0) \right\}^{1/2}
\\
&\times
\big\| \sigma_{\boldsymbol{\nu}} (x+\nu_0,\xi_1, \xi_2) \big\|_{L^2_{\xi_1, \xi_2} L^2_x(Q)}
\left\| \varphi \left( \frac{D+\nu_1 + \nu_2}{R_0} \right) g (x+\nu_0) \right\|_{L^2_x(Q)}.
\end{align*}
Here, since the equivalence
\begin{equation*}
\sup_{\nu_0, \boldsymbol{\nu}}
\big\| \sigma_{\boldsymbol{\nu}} (x+\nu_0,\xi_1, \xi_2) \big\|_{L^2_{\xi_1, \xi_2} L^2_x(Q)}
\approx 
\| \sigma \|_{L^2_{ul} ((\R^n)^{3})}
\end{equation*}
holds, we have
\begin{align}
\label{Iafterdiscretize}
\begin{split}
|I|
&\lesssim
\| \sigma \|_{L^2_{ul}}
\sum_{\nu_0\in\Z^n} 
\sum_{\boldsymbol{\nu}\in(\Z^n)^2} 
\langle \nu_0-\mu \rangle^{-L}
\\
&\times
\prod_{j=1,2}
\left\{ S \Big( \mathbf{1}_{B_{2R_{j}}} \ast 
\big|\square_{\nu_{j}} f_{j} \big|^2 \Big) (\nu_0) \right\}^{1/2}
\left\| \varphi \left( \frac{D+\nu_1 + \nu_2}{R_0} \right) g (x+\nu_0) \right\|_{L^2_x(Q)}.
\end{split}
\end{align}
In what follows, we write
each summand above by
\begin{align*}
A_j (\nu_j,\nu_0)&=\left\{ S \left( \mathbf{1}_{B_{2R_j}} \ast \big| \square_{\nu_j} f_j \big|^2 \right)(\nu_0) \right\}^{1/2}, \quad
j=1,2,
\\
A_0(\tau,\nu_0)
&=
\left\|\varphi \left( \frac{D+\tau}{R_0} \right) g(x+\nu_0) \right\|_{L^2_x(Q)}.
\end{align*}
Then, the inequality \eqref{Iafterdiscretize} 
is written by
\begin{equation}
\label{IwithII}
|I|\lesssim
\| \sigma \|_{L^2_{ul}}
\, \II
\end{equation}
with
\begin{equation*}
\II=
\sum_{\nu_0\in\Z^n} 
\langle \nu_0-\mu \rangle^{-L}
\sum_{\boldsymbol{\nu}\in(\Z^n)^2} 
A_0(\nu_1 + \nu_2, \nu_0)
\prod_{j=1,2} A_j (\nu_j,\nu_0).
\end{equation*}

Now, we shall estimate $\II$. 
By applying Lemma \ref{productLweak}
with $s_1 + s_2 = n/2$
to the sum over $\boldsymbol{\nu}$, we have
\begin{align*}
\II &=
\sum_{\nu_0\in\Z^n} 
\langle \nu_0-\mu \rangle^{-L}
\sum_{\boldsymbol{\nu}\in(\Z^n)^2} 
\langle \nu_1 \rangle^{-s_1} \langle \nu_2 \rangle^{-s_2}
A_0(\nu_1 + \nu_2, \nu_0)
\prod_{j=1,2} \langle \nu_j \rangle^{s_j} A_j (\nu_j,\nu_0)
\\&\lesssim 
\sum_{ \nu_0 \in \Z^{n} }
\langle \nu_0-\mu \rangle^{-L}
\| A_0 (\tau,\nu_0) \|_{\ell^2_{\tau}}
\prod_{j=1,2} 
\| \langle \nu_j \rangle^{s_j} A_j (\nu_j,\nu_0) \|_{\ell^2_{\nu_j}},
\end{align*}
since $\langle \nu_j \rangle^{-s_j} 
\in \ell^{n/s_j,\infty}(\Z^n)$, $j=1,2$.
Then, we use the H\"older inequality 
to the sum over $\nu_0$ to have
\begin{equation}\label{IIHolder}
\II
\lesssim 
\| A_0 (\tau,\nu_0) \|_{ \ell^2_{\tau} \ell^{2}_{\nu_0} }
\prod_{j=1,2} 
\| \langle \nu_0-\mu \rangle^{-L/2} \langle \nu_j \rangle^{s_j} A_j (\nu_j,\nu_0) \|_{ \ell^2_{\nu_j} \ell^{4}_{\nu_0}}. 
\end{equation}
Here, the norm of $A_0$ in \eqref{IIHolder}  
is estimated by the Plancherel theorem as follows:
\begin{align}\label{A0result}
\begin{split}
&\| A_0 (\tau,\nu_0) \|_{ \ell^2_{\tau} \ell^{2}_{\nu_0} }
=
\left\|\varphi \left( \frac{D+\tau}{R_0} \right) g(x+\nu_0) \right\|_{L^2_x(Q) \ell^2_{\tau} \ell^{2}_{\nu_0}}
\\
&=\left\|\varphi \left( \frac{D+\tau}{R_0} \right) g(x)
\right\|_{L^2_x(\R^n) \ell^2_{\tau}}
\approx
\left\|\varphi \left( \frac{\zeta +\tau}{R_0} \right) \widehat g(\zeta) \right\|_{L^2_\zeta(\R^n) \ell^2_{\tau}}
\approx 
R_{0}^{n/2} \| g \|_{L^2},
\end{split}
\end{align}
where we used that 
$\| \varphi ( \frac{\zeta +\tau}{R_0} ) \|_{\ell^2_\tau} 
\approx R_0^{n/2}$ for any $\zeta \in \R^n$.
Hence, by collecting \eqref{IwithII}, \eqref{IIHolder}, and \eqref{A0result} we have
\begin{equation}
\label{resultofI}
|I|\lesssim
R_{0}^{n/2} \| \sigma \|_{L^2_{ul}}
\, \| g \|_{L^2}
\, \prod_{j=1,2} 
\| \langle \nu_0-\mu \rangle^{-L/2} \langle \nu_j \rangle^{s_j} A_j (\nu_j,\nu_0) \|_{ \ell^2_{\nu_j} \ell^{4}_{\nu_0}}.
\end{equation}

We substitute \eqref{resultofI} into \eqref{saishodual}, 
and then use the H\"older inequality to $\ell^1_{\mu}$.
Then,
\begin{equation}\label{atodual}
\left\| T_{ \sigma }( f_1, f_2 ) \right\|_{(L^2,\ell^1)}
\lesssim
R_{0}^{n/2} \| \sigma \|_{L^2_{ul}}
\, \prod_{j=1,2} 
\left\| \langle \nu_0-\mu \rangle^{-L/2} \langle \nu_j \rangle^{s_j} A_j (\nu_j,\nu_0) \right\|_{ \ell^2_{\nu_j} \ell^{4}_{\nu_0} \ell^{2}_{\mu}}.
\end{equation}
To achieve our goal, we shall estimate the norm of $A_j$ in \eqref{atodual}.
We have
\begin{align*}
&
\left\| \langle \nu_0-\mu \rangle^{-L/2} \langle \nu_j \rangle^{s_j} A_j (\nu_j,\nu_0) \right\|_{ \ell^2_{\nu_j} \ell^{4}_{\nu_0} \ell^{2}_{\mu}}
\\
&=
\bigg\| \langle \nu_0-\mu \rangle^{-L} 
S \Big( \mathbf{1}_{B_{2R_j}} \ast 
\big| \langle \nu_j \rangle^{s_j} 
\square_{\nu_{j}} f_{j} \big|^2 \Big)(\nu_0) 
\bigg\|_{\ell^{1}_{\nu_{j}}\ell^{2}_{\nu_0} \ell^{1}_{\mu} }^{1/2}
\\
&=
\bigg\| \langle \nu_0 \rangle^{-L} 
S \Big( \mathbf{1}_{B_{2R_j}} \ast 
\big\| \langle \nu_j \rangle^{s_j} \square_{\nu_{j}} 
f_{j} \big\|_{\ell^2_{\nu_{j}}}^2 \Big)(\nu_0+\mu) 
\bigg\|_{\ell^{2}_{\nu_0} \ell^{1}_{\mu} }^{1/2}
=(\ast\ast).
\end{align*}
Then, by using the embedding 
$\ell^1_{\nu_0} \hookrightarrow \ell^2_{\nu_0}$,
Lemma \ref{propertiesofS} (3),
and the boundedness of the operator 
$S$ on $L^1$,
we have
\begin{align*}
(\ast\ast)
%&\leq
%\left\| \langle \nu_0 \rangle^{-L}  
%\right\|_{\ell^{1}_{\nu_0} }^{1/2}
%\left\| 
%S \Big( \mathbf{1}_{B_{2R_j}} \ast 
%\big\| \langle \nu_j \rangle^{s_j} \square_{\nu_{j}} 
%f_{j} \big\|_{\ell^2_{\nu_{j}}}^2 \Big)(\mu) 
%\right\|_{\ell^{1}_{\mu} }^{1/2}
%\\&
\lesssim
\left\| 
\mathbf{1}_{B_{2R_j}} \ast 
\big\| \langle \nu_j \rangle^{s_j} \square_{\nu_{j}} 
f_{j} \big\|_{\ell^2_{\nu_{j}}}^2\right\|_{ L^{1}_{x} }^{1/2}
\lesssim
R_j^{n/2}
\left\| \langle \nu_j \rangle^{s_j} \square_{\nu_{j}} 
f_{j} \right\|_{\ell^2_{\nu_{j}} L^{2}_{x} },
\end{align*}
where the first inequality holds if 
$L$ is suitably large. 
Here, by the Plancherel theorem
\begin{align*}
\left\| \langle \nu_j \rangle^{s_j} \square_{\nu_{j}} f_{j} 
\right\|_{\ell^2_{\nu_{j}} L^{2}_{x} }
&\approx
\left\| \langle \nu_j \rangle^{s_j} \kappa(\xi - \nu_{j}) 
\widehat{f_{j}}(\xi) \right\|_{\ell^2_{\nu_{j}} L^{2}_{\xi} }
\\&\approx
\left\| \langle \xi \rangle^{s_j} \kappa(\xi - \nu_{j}) 
\widehat{f_{j}}(\xi) \right\|_{\ell^2_{\nu_{j}} L^{2}_{\xi} }
\approx
\| f_{j} \|_{ H^{s_j} },
\end{align*}
where, we used that
$\sum_{\nu_j \in \Z^n} |\kappa (\cdot - \nu_j) |^2 \approx 1$
to have the last equivalence.
Therefore,
\begin{align}\label{resultAj}
&
\left\| \langle \nu_0-\mu \rangle^{-L/2} \langle \nu_j \rangle^{s_j} A_j (\nu_j,\nu_0) \right\|_{ \ell^2_{\nu_j} \ell^{4}_{\nu_0} \ell^{2}_{\mu}}
\lesssim
R_j^{n/2}
\left\| f_{j} \right\|_{ H^{s_j} }.
\end{align}

Substituting \eqref{resultAj} into \eqref{atodual}, we obtain
\begin{equation*}
\left\| T_{ \sigma }( f_1, f_2 ) \right\|_{(L^2,\ell^1)}
\lesssim
\left( R_{0} R_{1} R_{2} \right)^{n/2} 
\| \sigma \|_{L^2_{ul}}
\prod_{j=1,2}  \left\| f_{j} \right\|_{ H^{s_j} },
\end{equation*}
which completes the proof.
\end{proof}

%%%=============================
%%%=============================
\subsection{Proof of Theorem \ref{main-thm}}
\label{subsection-Proof-Main-1} 

From Proposition \ref{main-prop}, 
we shall deduce Theorem \ref{main-thm}.

\begin{proof}[Proof of Theorem \ref{main-thm}]
We decompose the symbol $\sigma$ 
by a partition $\{ \varphi (\cdot - k ) \}_{k\in\Z^n}$ 
stated in Section \ref{secmodsp}:
\begin{align*}
\sigma (x, \xi_1, \xi_2)
&=
\sum_{ (k_0,k_1,k_2) \in (\Z^n)^{3}} 
\varphi (D_x - k_0)
\varphi (D_{\xi_1} - k_1)
\varphi (D_{\xi_2} - k_2)
\sigma (x, \xi_1, \xi_2).
\\&=
\sum_{ \boldsymbol{k} \in (\Z^n)^{3}} 
\square_{\boldsymbol{k}} 
\sigma (x, \xi_1, \xi_2)
\end{align*}
with
\[
\square_{\boldsymbol{k}} 
\sigma (x, \xi_1, \xi_2)
=
\varphi (D_x - k_0)
\varphi (D_{\xi_1} - k_1)
\varphi (D_{\xi_2} - k_2)
\sigma (x, \xi_1, \xi_2),
\]
where $\boldsymbol{k}=(k_0,k_1,k_2) \in (\Z^n)^3$.
Here, we observe that
\begin{equation*}
\varphi (D-k) f (x) 
= e^{i x \cdot k} \varphi (D) \left[ M_{k} f \right] (x), \quad
k \in \Z^n,
\end{equation*}
where $M_{k} f (z) = e^{-ik \cdot z} f (z)$.
From this, we see that 
\begin{align*}
\square_{\boldsymbol{k}} 
\sigma (x, \xi_1, \xi_2)
=
e^{i x \cdot k_0} e^{i \xi_1 \cdot k_1} e^{i \xi_2 \cdot k_2} \,
\square_{(0,0,0)}
\left[ (M_{k_0} \otimes M_{k_1} \otimes M_{k_2} ) \sigma \right] (x,\xi_1, \xi_2).
\end{align*}
%where 
%\[
%(M_{k_0} \otimes M_{k_1} \otimes M_{k_2} ) \sigma (x',\xi_1',\xi_2') =
%e^{-ik_0 \cdot x'} e^{-ik_1 \cdot \xi_1'} e^{-ik_2 \cdot \xi_2'} \sigma (x',\xi_1',\xi_2')
%\]
Hence,
\begin{align*}
T_{ \square_{\boldsymbol{k}} \sigma } (f_1, f_2)
= e^{i x \cdot k_0} \,
T_{\square_{(0,0,0)} 
	\left[ (M_{k_0} \otimes M_{k_1} \otimes M_{k_2} ) \sigma \right] }
\big(f_1(\cdot+k_1), f_2(\cdot+k_2)\big).
\end{align*}
Since
\begin{equation*}
\supp \calF \left[ \square_{(0,0,0)} 
\left[ (M_{k_0} \otimes M_{k_1} \otimes M_{k_2} ) \sigma \right] \right]
\subset [-1,1]^{3n},
\end{equation*}
we have 
by Proposition \ref{main-prop} 
and the translation invariance of the Sobolev space
\begin{align*}
\| T_{ \square_{\boldsymbol{k}} \sigma } \|_{H^{s_1} \times H^{s_2} \to (L^2,\ell^1)}
\lesssim
\left\| \square_{(0,0,0)}\left[ (M_{k_0} \otimes M_{k_1} \otimes M_{k_2} ) \sigma \right] \right\|_{L^2_{ul}}
=
\left\| \square_{\boldsymbol{k}} \sigma \right\|_{L^2_{ul}}.
\end{align*}
Furthermore, we have the equivalence
\begin{equation}\label{L2ulLinfty}
\| \square_{\boldsymbol{k}} \sigma \|_{L^2_{ul}}
\approx 
\| \square_{\boldsymbol{k}} \sigma \|_{L^\infty}
\end{equation}
with implicit constants independ of $\boldsymbol{k}$
(see Remark \ref{remL2ulLinfty} below).
Therefore,
\begin{align*}
\| T_\sigma \|_{H^{s_1} \times H^{s_2} \to (L^2,\ell^1)}
&\leq
\sum_{ \boldsymbol{k} \in (\Z^{n})^3 }
\left\| T_{ \square_{\boldsymbol{k}} \sigma } \right\|_{H^{s_1} \times H^{s_2} \to (L^2,\ell^1)}
\\&\lesssim
\sum_{ \boldsymbol{k} \in (\Z^{n})^3 }
\| \square_{\boldsymbol{k}} \sigma \|_{L^2_{ul}}
\approx
\| \sigma \|_{M^{\infty,1}},
\end{align*}
which completes the proof of Theorem \ref{main-thm}.
\end{proof}

%%%=============================
%%%=============================

\begin{rem}\label{remL2ulLinfty}
The equivalence \eqref{L2ulLinfty}
was already pointed out by Boulkhemair
\cite[Appendix A.1]{boulkhemair 1995}.
However, for the reader's convenience,
we give a proof of \eqref{L2ulLinfty}.
The way of the proof here is essentialy 
the same as was given in
\cite[Appendix A.1]{boulkhemair 1995}.

Let $\{ \varphi (\cdot - k ) \}_{k\in\Z^d}$ be
a partition on $\R^d$ stated Section \ref{secmodsp}.
We take a function $\phi \in \calS(\R^d)$
satisfying that
$\phi = 1$ on $[-1,1]^d$.
Then, 
\begin{align*}
\varphi(D-k) f(x) 
&=
\phi(D-k) \varphi(D-k) f(x) 
\\&=
\int_{\R^d} e^{i(x-y)\cdot k} \check \phi(x-y) 
\, \varphi(D-k) f(y) \,dy .
\end{align*}
By the Cauchy--Schwarz inequality, we have
\begin{align*}
| \varphi(D-k) f(x) |
&\lesssim
\int_{\R^d} \langle x-y \rangle^{-d-1} 
| \varphi(D-k) f(y) | \,dy 
\\&\lesssim
\left( \int_{\R^d} \langle x-y \rangle^{-d-1} 
| \varphi(D-k) f(y) |^2 \,dy \right)^{1/2}
=(\dag)
\end{align*}
for any $k \in \Z^d$ and $x\in\R^d$.
As in \eqref{cubicdiscretization},
we decompose the integral and have
\begin{align*}
(\dag)
&\approx
\left( \sum_{\nu \in \Z^d } \int_{ [-1/2,1/2)^d } \langle x-\nu \rangle^{-d-1} 
| \varphi(D-k) f(y+\nu) |^2 \,dy \right)^{1/2}
\\&\leq 
\| \varphi(D-k) f \|_{L^2_{ul}(\R^d)}
\left( \sum_{\nu \in \Z^d }\langle x-\nu \rangle^{-d-1} \right)^{1/2}
\approx
\| \varphi(D-k) f \|_{L^2_{ul}(\R^d)}
\end{align*}
for any $k \in \Z^d$ and $x\in\R^d$.
Therefore, we obtain the inequality 
$\gtrsim$ in \eqref{L2ulLinfty}.
The opposite inequality is obvious,
so that we have the equivalence \eqref{L2ulLinfty}.
\end{rem}

\if0
\begin{rem}
We shall consider the boundedness
for the linear pseudo-differential operators
with the Sj\"ostrand class symbols.
In \cite[Proposition 5.1]{KMT-arxiv-2},
the following was proved:
Let $R_{0}, R_{1} \in [1, \infty)$ and
$r \in (0, \infty]$.
Suppose $\sigma$ is 
a bounded continuous function on 
$(\R^n)^{2}$ such that  
$\supp \calF \sigma \subset 
B_{R_0} \times B_{R_1}$. 
Then 
\begin{equation*}
\left\| \sigma (X,D) \right\|_{(L^2,\ell^{r}) \to (L^2, \ell^{r})}
\lesssim
R_{0}^{n/2} R_{1}^{n/\min(2,r)}
\| \sigma \|_{ L^{2}_{ul} ((\R^{n})^{2})}. 
\end{equation*}
From this,
following the same lines as in the proof of Theorem \ref{main-thm},
we see that
if $\sigma \in M^{\infty,1} ((\R^n)^2)$,
then the linear pseudo-differential operator 
$\sigma(X,D)$ is bounded on $(L^2,\ell^{r})$,
$0 < r \leq \infty$.
Since $(L^2,\ell^2) = L^2$,
this boundedness is the generalization 
of the result by Sj\"ostrand.
\end{rem}
\fi

%%ACKNOWLEDGE%%%%%%%%%%%%%%%%%%%%%%%%%%%%%%
\section*{Acknowledgments}
The author expresses his special thanks
to Professor Akihiko Miyachi and Professor Naohito Tomita
for a lot of fruitful suggestions and encouragement.

%%ACKNOWLEDGE%%%%%%%%%%%%%%%%%%%%%%%%%%%%%%

%%%===============================
%%%===============================

%%%==================================================================
\end{document}